\newtheorem{question}{Question}[section]
\newtheorem{theorem}[question]{Theorem}
\newtheorem{lemma}[question]{Lemma}
\newtheorem{definition}[question]{Definition}
\title{Short proof of a theorem of Juh\'asz}
\author{Santi Spadaro}
\address{Department of Mathematics, Ben Gurion University of the Negev, Be'er Sheva, Israel 84105}
\email{santi@cs.bgu.ac.il, santispadaro@yahoo.com}
\thanks{Supported by the Center for Advanced Studies in Mathematics at Ben Gurion University.}
\subjclass[2000]{54A25}
\keywords{Arhangel'skii Theorem, increasing union, free sequence, elementary submodel}
\begin{document}

\begin{abstract}
We give a simple proof of the increasing strengthening of Arhangel'skii's Theorem.
\end{abstract}

\maketitle

\section{Introduction}

The pair $(X, \tau)$ denotes a Hausdorff topological space. In chapter 6 of his book \cite{J}, Istv\'an Juh\'asz proves the following theorem.

\begin{theorem} \label{arhanincr}
(\cite{J}, 6.11) Suppose $X=\bigcup_{\alpha < \lambda} X_\alpha$, where $X_\alpha \subset X_\beta$ whenever $\alpha < \beta$ and $t(X_\alpha) \cdot \psi(X_\alpha) \cdot L(X_\alpha) \leq \kappa$ for every $\alpha < \lambda$. Then $|X| \leq 2^\kappa$.
\end{theorem}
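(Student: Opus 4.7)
The plan is a standard elementary-submodel reflection argument, adapted to the increasing-union hypothesis. Fix a regular $\theta$ large enough that $H(\theta)$ sees everything of interest and choose $M \prec H(\theta)$ with $|M| = 2^\kappa$, $[M]^{\leq \kappa} \subseteq M$, $2^\kappa \subseteq M$, and $X, \kappa, \lambda, \langle X_\alpha : \alpha < \lambda \rangle \in M$. Such $M$ exists since $(2^\kappa)^\kappa = 2^\kappa$. Everything reduces to proving $X \subseteq M$, which immediately yields $|X| \leq |M| = 2^\kappa$.

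Assume for contradiction that some $x \in X \setminus M$, and let $\alpha^\ast = \min\{\alpha : x \in X_\alpha\}$. I would aim to construct a free sequence $\{y_\xi : \xi < \kappa^+\}$ inside $X_{\alpha^\ast}$: since $F(Y) \leq t(Y) \cdot L(Y)$ holds for every Hausdorff $Y$, such a sequence contradicts $t(X_{\alpha^\ast}) \cdot L(X_{\alpha^\ast}) \leq \kappa$. The construction follows the classical Arhangel'skii--\v{S}apirovski\v{\i}--Pol recursion. At stage $\xi$: use $\psi(X_{\alpha^\ast}) \leq \kappa$ to fix a $\kappa$-family of open sets separating $x$ from the already-chosen $y_\eta$; use $L(X_{\alpha^\ast}) \leq \kappa$ to thin the resulting open cover of $\overline{\{y_\eta : \eta<\xi\}} \cap X_{\alpha^\ast}$ to size $\leq \kappa$; use $t(X_{\alpha^\ast}) \leq \kappa$ together with $\kappa$-closure of $M$ to keep the data inside $M$; then pick $y_\xi \in X_{\alpha^\ast} \cap M$ preserving freeness.

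The main obstacle is that in general $\alpha^\ast \notin M$, so $X_{\alpha^\ast}$ is not a parameter available inside $M$ and the recursion above cannot be literally performed there. My remedy would be to first show $x \in X_\alpha$ for some $\alpha \in \lambda \cap M$. When $\mathrm{cf}(\lambda) \leq \kappa$, this is immediate: a cofinal $\kappa$-sequence in $\lambda$ can be fixed inside $M$ by $\kappa$-closure, so $\lambda \cap M$ is cofinal in $\lambda$. The case $\mathrm{cf}(\lambda) > \kappa$ is where I expect the real work: the level-wise Arhangel'skii bound $|X_\beta| \leq 2^\kappa$ is available, and one would exploit it together with the $\kappa$-closure of $M$ to trim the chain to a shorter one, cofinal inside $M$, whose union still contains $x$. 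Once such an $\alpha \in M$ has been located, the free-sequence recursion is run on $X_\alpha$ in place of $X_{\alpha^\ast}$ and the proof concludes.
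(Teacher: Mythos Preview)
Your remedy in the hard case is self-defeating. You assume $2^\kappa \subseteq M$; combined with the level-wise Arhangel'skii bound $|X_\alpha|\le 2^\kappa$ this forces $X_\alpha \subseteq M$ for every $\alpha \in M\cap\lambda$ (pick in $M$ a surjection $2^\kappa \twoheadrightarrow X_\alpha$ and evaluate it on $2^\kappa\subseteq M$). Hence $x\notin M$ implies $x\notin X_\alpha$ for \emph{every} $\alpha\in M$, so there is no hope of ``trimming the chain to one cofinal inside $M$ whose union still contains $x$''. In the essential case $\lambda=(2^\kappa)^+$ the set $M\cap\lambda$ is bounded, and the point $x$ lives strictly above it; you cannot drag $x$ into an $M$-visible level.

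What the paper does instead is to abandon the attempt to trap $x$ in some $X_\alpha$ with $\alpha\in M$ and to run the free-sequence recursion in $X\cap M$ itself, not in a single level. The missing ingredient is a closure lemma: for every $C\in [X\cap M]^{\le\kappa}$ one has $\overline{C}\subseteq X\cap M$, hence $\overline{C}$ is contained in some $X_\theta$ (here regularity of $\lambda=(2^\kappa)^+$ is used). Its proof is an elementarity trick using the chain at \emph{all} levels $\theta\in M$ simultaneously: for each such $\theta$ one covers $X_\theta\setminus\{p\}$ by $\kappa$ many sets whose closures miss $p$, records the traces of $C$ on these sets (which lie in $M$ by $\kappa$-closure), and reflects the statement that these traces recover $\overline{C}\cap X_\theta$. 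Once this lemma is in hand, the Pol-style construction you describe goes through inside $X\cap M$, and the resulting $\kappa^+$-free sequence, being a $\kappa^+$-subset of $X\cap M$, is bounded in the chain and yields the contradiction.
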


Where $\psi(X)$, $t(X)$ and $L(X)$ are respectively the \emph{pseudocharacter}, the \emph{tightness} and the \emph{Lindel\"of number} of $X$ (see \cite{E} or \cite{J}). This may be considered an \emph{increasing strengthening} (in the sense of Juh\'asz and Szentmikl\'ossy \cite{JS}) of the following theorem of Arhangel'skii and Shapirovskii.

\begin{theorem} \label{arhanshap}
(Arhangel'skii and Shapirovskii, \cite{Sh}) $|X| \leq 2^{\psi(X) \cdot t(X) \cdot L(X)}$
\end{theorem}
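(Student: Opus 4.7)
Set $\kappa := \psi(X) \cdot t(X) \cdot L(X)$. My plan is the classical closing-off argument: I will build an increasing chain $\{F_\alpha : \alpha < \kappa^+\}$ of subsets of $X$, each of size at most $2^\kappa$, whose union $F$ is closed in $X$ and necessarily equals $X$.

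First, using $\psi(X) \leq \kappa$ and Hausdorffness, for every $x \in X$ I fix a family $\mathcal{U}(x) = \{U_\alpha^x : \alpha < \kappa\} \subseteq \tau$ with $\bigcap_{\alpha < \kappa} U_\alpha^x = \{x\}$. Then I construct $F_\alpha$ by transfinite recursion on $\alpha < \kappa^+$: set $F_0 = \{x_0\}$; at limits take unions; at successor stages enlarge $F_\alpha$ to $F_{\alpha+1}$ by adding (a) a witness $z(\mathcal{V}) \in X \setminus \bigcup \mathcal{V}$ for every $\mathcal{V} \in [\bigcup_{x \in F_\alpha} \mathcal{U}(x)]^{\leq \kappa}$ with $X \not\subseteq \bigcup \mathcal{V}$, and (b) the closure $\overline{S}$ of every $S \in [F_\alpha]^{\leq \kappa}$. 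To check $|F_{\alpha+1}| \leq 2^\kappa$, one uses $(2^\kappa)^\kappa = 2^\kappa$ together with the auxiliary bound $|\overline{S}| \leq 2^\kappa$ for $S \in [X]^{\leq \kappa}$.

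Now set $F := \bigcup_{\alpha < \kappa^+} F_\alpha$, so $|F| \leq 2^\kappa$. Using $t(X) \leq \kappa$ and the regularity of $\kappa^+$, I argue $F$ is closed: any $y \in \overline{F}$ lies in $\overline{S}$ for some $S \in [F]^{\leq \kappa}$, and $S \subseteq F_\beta$ for some $\beta < \kappa^+$, so by (b) $y \in \overline{S} \subseteq F_{\beta+1} \subseteq F$. Hence $L(F) \leq L(X) \leq \kappa$. If $F \neq X$, pick $p \in X \setminus F$; for each $x \in F$, since $\bigcap_\alpha U_\alpha^x = \{x\}$ and $p \neq x$, pick $\alpha(x) < \kappa$ with $p \notin U_{\alpha(x)}^x$. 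Then $\{U_{\alpha(x)}^x : x \in F\}$ covers $F$, and Lindelöfness of $F$ extracts a subcover $\{U_{\alpha(x_i)}^{x_i} : i < \kappa\}$; regularity of $\kappa^+$ places all $x_i$ in one $F_\beta$, so clause (a) applied to $\mathcal{V} := \{U_{\alpha(x_i)}^{x_i} : i < \kappa\}$ produces $z(\mathcal{V}) \in F_{\beta+1} \subseteq F$ outside $\bigcup \mathcal{V}$, contradicting that $\mathcal{V}$ covers $F$. Hence $F = X$ and $|X| \leq 2^\kappa$.

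The main obstacle will be the auxiliary estimate $|\overline{S}| \leq 2^\kappa$ used in clause (b): this is the place where the triple $\psi \cdot t \cdot L$ must genuinely interact. The typical route is to assign to each $y \in \overline{S}$ the trace $\alpha \mapsto U_\alpha^y \cap S$ and then to use Lindelöfness on $\overline{S}$ to cut down the ambiguity, effectively running a miniature version of the same closing-off inside $\overline{S}$. Once this lemma is granted, the bookkeeping in (a), the tightness argument for closedness in (b), and the pseudocharacter-plus-Lindelöf contradiction at the end are all routine.
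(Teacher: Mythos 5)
Your closing-off argument is the classical Pol--Shapirovskii proof, and its skeleton is sound; but note that the paper never proves Theorem \ref{arhanshap} at all --- it is quoted from \cite{Sh} and used as a black box to dispose of the case $\lambda\le 2^\kappa$ in the proof of Theorem \ref{arhanincr}. So the fair comparison is with the paper's proof of the more general Theorem \ref{arhanincr} specialized to a single space, which is an elementary-submodel argument built on free sequences and Lemma \ref{littlemma}. Two differences are worth recording. First, your step showing that $F=\bigcup_{\alpha<\kappa^+}F_\alpha$ is closed genuinely uses $t(X)\le\kappa$, whereas the paper replaces the final covering argument by a free-sequence construction and so needs only $F(X)\le\kappa$ (this is exactly the point of Theorem \ref{arhanfree}); your route proves the literal statement but not that strengthening. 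Second, the one place you leave a genuine gap --- the auxiliary estimate $|\overline{S}|\le 2^\kappa$ for $S\in[X]^{\le\kappa}$ --- is precisely where the paper's Lemma \ref{littlemma} would enter, and your phrase ``use Lindel\"ofness on $\overline{S}$ to cut down the ambiguity'' is not yet a proof. The clean way to close it: since $L(X)\cdot\psi(X)\le\kappa$, Lemma \ref{littlemma} gives $L(X\setminus\{y\})\le\kappa$ for every $y$; for each $z\ne y$ pick disjoint open $V_z\ni z$ and $W_z\ni y$, extract a subcover $\{V_{z_i}:i<\kappa\}$ of $X\setminus\{y\}$, and observe that $\bigcap_{i<\kappa}\overline{W_{z_i}}=\{y\}$, i.e.\ the \emph{closed} pseudocharacter of $X$ is at most $\kappa$. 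Then for $y\in\overline{S}$ the assignment $y\mapsto\langle W^y_i\cap S\rangle_{i<\kappa}$ (with $\bigcap_i\overline{W^y_i}=\{y\}$) is injective, because $y\in\overline{W^y_i\cap S}\subseteq\overline{W^y_i}$ for every $i$; hence $|\overline{S}|\le(2^{|S|})^\kappa\le 2^\kappa$. With that lemma supplied, your bookkeeping in (a), the cardinality bound $(2^\kappa)^\kappa=2^\kappa$, the closedness of $F$ via regularity of $\kappa^+$, and the final pseudocharacter-plus-Lindel\"of contradiction are all correct. What your approach buys is elementarity (no elementary submodels); what the paper's buys is the weakening of $t$ to $F$ and the uniformity needed for increasing unions.
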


Juh\'asz's proof of Theorem $\ref{arhanincr}$ is five pages long. We offer a simpler and shorter proof with some help from the technique of elementary submodels. The background on this technique needed to read this paper is quite basic and can be found in the first few sections of \cite{D}.

Recall the definition of a free sequence, which already had a crucial role in the original proof \cite{A} of Arhangel'skii's famous theorem saying that the cardinality of a first-countable Lindel\"of space never exceeds the continuum (a special case of Theorem $\ref{arhanshap}$).

\begin{definition}
A set $\{x_\alpha: \alpha < \kappa\}$ is called \emph{a free sequence of length $\kappa$} if for every $\beta < \kappa$ we have $\overline{\{x_\alpha: \alpha < \beta\}} \cap \overline{\{x_\alpha : \alpha \geq \beta \}}=\emptyset$. 
\end{definition}

As usual, we let $F(X)=\sup \{|F|: F \subset X$ is a free sequence $\}$. It is known that $F(X) \leq L(X) \cdot t(X)$. Indeed, assume that $L(X) \cdot t(X) \leq \kappa$. If $X$ contained a free sequence $F$ of size $\kappa^+$ then by $L(X) \leq \kappa$, $F$ would have a complete accumulation point. But no complete accumulation point of a free sequence can lie in the closure of an initial segment of it. And this contradicts $t(X) \leq \kappa$.

\section{The main proof}

Before proving Theorem $\ref{arhanincr}$ we need a very simple lemma. Define $\Phi(X) = \sup \{L(X \setminus \{x\}): x \in X \}$.

\begin{lemma} \label{littlemma}
$\Phi(X)=L(X) \cdot \psi(X)$.
\end{lemma}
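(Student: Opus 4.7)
The plan is to prove the two inequalities separately, relying only on the Hausdorff property and the fact that a closed subspace of a Lindel\"of space is Lindel\"of.

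For $\Phi(X) \geq L(X) \cdot \psi(X)$, I would verify the two factors on the right separately. To see $L(X) \leq \Phi(X)$, take any open cover $\mathcal{U}$ of $X$, pick one point $x$ and one set $U \in \mathcal{U}$ containing it, then apply $L(X \setminus \{x\}) \leq \Phi(X)$ to extract a subcover of $X \setminus \{x\}$ of size $\leq \Phi(X)$ and throw $U$ back in. To see $\psi(X) \leq \Phi(X)$, fix $x \in X$ and use the Hausdorff property to choose, for each $y \neq x$, disjoint open sets $V_y \ni x$ and $W_y \ni y$. The family $\{W_y : y \neq x\}$ covers $X \setminus \{x\}$, so by $L(X \setminus \{x\}) \leq \Phi(X)$ it has a subcover $\{W_{y_\alpha} : \alpha < \Phi(X)\}$. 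The corresponding family $\{V_{y_\alpha} : \alpha < \Phi(X)\}$ is then a pseudobase at $x$ of size $\leq \Phi(X)$, because every $z \neq x$ lies in some $W_{y_\alpha}$ and hence outside $V_{y_\alpha}$.

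For the reverse inequality $\Phi(X) \leq L(X) \cdot \psi(X)$, set $\kappa = L(X) \cdot \psi(X)$ and fix $x \in X$. Choose a pseudobase $\{U_\alpha : \alpha < \kappa\}$ of open sets at $x$ with $\bigcap_\alpha U_\alpha = \{x\}$, so that $X \setminus \{x\} = \bigcup_{\alpha < \kappa} (X \setminus U_\alpha)$ is the union of $\kappa$ closed subsets of $X$. Since each $X \setminus U_\alpha$ is closed in $X$ and the Lindel\"of degree is inherited by closed subspaces, each has Lindel\"of degree at most $L(X) \leq \kappa$. Given any open cover of $X \setminus \{x\}$, extract a subcover of size $\leq \kappa$ for each $X \setminus U_\alpha$ and take the union; this yields a subcover of $X \setminus \{x\}$ of size $\kappa \cdot \kappa = \kappa$.

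I do not expect any serious obstacle here: the only point that requires a little care is the direction $\psi(X) \leq \Phi(X)$, where one must remember to invoke the Hausdorff hypothesis to pair up each witness to $y \in W_y$ with an open neighborhood $V_y$ of $x$ disjoint from $W_y$, so that the extracted $\Phi(X)$-sized subcover of $X \setminus \{x\}$ translates into a pseudobase at $x$.
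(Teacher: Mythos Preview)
Your proof is correct and follows essentially the same route as the paper's. The only cosmetic difference is in the verification of $\psi(X)\le\Phi(X)$: the paper chooses for each $y\neq x$ an open $U_y\ni y$ with $x\notin\overline{U_y}$ and takes the sets $X\setminus\overline{U_y}$ as the pseudobase, whereas you pick disjoint pairs $V_y\ni x$, $W_y\ni y$ and use the $V_y$'s directly---both are the same Hausdorff separation idea.
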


\begin{proof}
Since any open cover of $X \setminus \{x\}$ can be extended to a cover of $X$ by the addition of a single open set, we have $L(X) \leq \Phi(X)$. If for some $x \in X$ we have $L(X \setminus \{x\}) \leq \kappa$ then for every $y \neq x$ select $U_y$ such that $x \notin \overline{U_y}$. Then $\mathcal{U} =\{U_y: y \neq x \}$ covers $X \setminus \{x\}$ and hence we can find a subcover $\mathcal{V}$ having cardinality $\leq \kappa$. Then $\bigcap \{X \setminus \overline{U} : U \in \mathcal{U}\}=\{x\}$, which proves that $\psi(x,X) \leq \kappa$. So, taking suprema we have that $\psi(X) \leq \Phi(X)$, and hence $\psi(X) \cdot L(X) \leq \Phi(X)$.

To prove the other direction, fix $x \in X$ suppose that $L(X) \cdot \psi(X)=\kappa$. Let $\mathcal{U}$ be an open collection such that $|\mathcal{U}| \leq \kappa$ and $\bigcap \mathcal{U} = \{x\}$. Then $X \setminus \{x\}=\bigcup \{X \setminus U: U \in \mathcal{U}\}$ and $L(X \setminus U) \leq \kappa$ for every $U \in \mathcal{U}$. Thus $L(X \setminus \{x\}) \leq \kappa$.
\end{proof}

\begin{proof}[Proof of Theorem $\ref{arhanincr}$]
If $\lambda \leq 2^\kappa$ then we are done by Theorem $\ref{arhanshap}$, so we can assume that $\lambda=(2^\kappa)^+$. Let  $M$ be an elementary submodel of $H(\mu)$, where $\mu$ is a large enough regular cardinal, such that $[M]^\kappa \subset M$, $|M|=2^\kappa$, $\{\kappa, \tau, \lambda\} \subset M$ and $\{X_\alpha : \alpha < \lambda \} \in M$. Call a set $C \subset X$ \emph{bounded} if $|C| \leq 2^\kappa$. 

\medskip

\noindent \textbf{Claim 1.} If $C \in [X \cap M]^{\leq \kappa}$ then $\overline{C}$ is bounded.

\begin{proof}[Proof of Claim 1]
The proof of this claim is an extension of the proof of Subclaim 2 of Example 2.2 in \cite{D}. We will achieve Claim 1 if we can prove that $\overline{C} \subset X \cap M$. So, suppose that this is not true and choose $p \in \overline{C} \setminus M$. For every $\theta < \lambda$ such that $p \in X_\theta$ we can use $\psi(X_\theta) \leq \kappa$ to find open sets $\{U^\theta_\alpha: \alpha < \kappa \}$ such that $X_\theta \setminus \{p\}=\bigcup_{\alpha < \kappa} X_\theta \setminus U^\theta_\alpha$. By $L(X_\theta \setminus U^\theta_\alpha) \leq \kappa$ we can find relative open sets $\{V^\theta_{\alpha \beta}: \beta < \kappa \}$ in $X_\theta$ covering $X_\theta \setminus U^\theta_\alpha$ such that $p \notin \overline{V^\theta_{\alpha \beta}}$ for every $\alpha, \beta < \kappa$.

Then we have $\overline{C} \cap (X_\theta \setminus \{p\}) =\bigcup_{\alpha, \beta < \kappa } \overline{C^\theta_{\alpha \beta}} \cap X_\theta$ for every $\theta < \lambda$, where $C^\theta_{\alpha \beta}=V^\theta_{\alpha \beta} \cap C$.

Note now that by $\kappa$-closedness of $M$, $C^\theta_{\alpha \beta} \in M$ for every $\alpha, \beta$ and $\theta$. Moreover, since $p \notin M$ $$(\forall \theta \in \lambda \cap M)(\overline{C} \cap X_\theta \cap M=\bigcup_{\alpha, \beta < \kappa} \overline{C^\theta_{\alpha \beta}} \cap X_\theta \cap M)$$ So $$M \models (\forall \theta < \lambda) (\overline{C} \cap X_\theta=\bigcup_{\alpha, \beta < \kappa} \overline{C^\theta_{\alpha \beta}} \cap X_\theta)$$ which implies $$H(\mu) \models (\forall \theta < \lambda) (\overline{C} \cap X_\theta=\bigcup_{\alpha, \beta < \kappa} \overline{C^\theta_{\alpha \beta}} \cap X_\theta)$$ which is a contradiction because $p \in X_\theta$ for some $\theta<\lambda$.
\renewcommand{\qedsymbol}{$\triangle$}
\end{proof}

Now we claim that $X \subset M$. Suppose not and choose $p \in X \setminus M$.

\medskip

\noindent \textbf{Claim 2.} The collection $\mathcal{U}=\{U \in M \cap \tau: p \notin U\}$ is an open cover of $X \cap M$. 

\begin{proof}[Proof of Claim 2]

Fix $x \in X \cap M$ and let $\mathcal{V}=\{V \in \tau: x \notin \overline{V} \}$. Note that $\mathcal{V} \in M$ and $\mathcal{V}$ covers $X \setminus \{x\}$. Suppose you have constructed subcollections $\{\mathcal{V}_\alpha : \alpha < \beta \}$ of $\mathcal{V}$ such that $\mathcal{V}_\alpha \in M$, $|\mathcal{V}_\alpha| \leq \kappa$ for every $\alpha < \beta$ and a set $\{x_\alpha : \alpha < \beta \} \subset X \cap M$ such that $Cl_{X \setminus \{x\}}(\{x_\alpha : \alpha < \gamma \}) \subset \bigcup \bigcup_{\alpha < \gamma} \mathcal{V_\alpha}$ for every $\gamma < \beta$. By Claim 1, $Cl_{X \setminus \{x\}}(\{x_\alpha : \alpha < \beta\})$ is bounded and hence there is $\lambda_\beta < \lambda$ such that $Cl_{X \setminus \{x\}}(\{x_\alpha: \alpha < \beta\}) \subset X_{\lambda_\beta}$. By Lemma $\ref{littlemma}$, $L(Cl_{X\setminus \{x\}}(\{x_\alpha: \alpha < \beta \})) \leq \kappa$, so there is a subcollection $\mathcal{V}_\beta$ of $\mathcal{V}$ such that $|\mathcal{V}_\beta| \leq \kappa$ and $Cl_{X\setminus \{x\}}(\{x_\alpha: \alpha < \beta \}) \subset \bigcup \mathcal{V}_\beta$. If $\bigcup_{\alpha \leq \beta} \mathcal{U}_\alpha$ does not cover $X \setminus \{x\}$ then we can fix a point $x_\beta \in ((X \setminus \{x\}) \cap M) \setminus \bigcup_{\alpha \leq \beta} \mathcal{U}_\alpha$. If the induction doesn't stop at an ordinal below $\kappa^+$ then $F=\{x_\alpha : \alpha < \kappa^+\}$ is a free sequence of length $\kappa^+$ in $X \setminus \{x\}$. Since $F$ is bounded we can choose $\theta < \lambda$ such that $F \subset X_\theta$. Again by Lemma $\ref{littlemma}$ we have $L(X_\theta \setminus \{x\}) \leq \kappa$. But then $F$ cannot converge to $x$, because any $\kappa^+$ sized subset of a space of Lindel\"of number $\kappa$ has a complete accumulation point. Therefore, there is an open neighbourhood $G$ of $x$ which misses $\kappa^+$ many points of $F$ and $F \setminus G$ is a free sequence in $X$ of cardinality $\kappa^+$. But that contradicts $F(X_\theta) \leq \kappa$.

So there is a subcollection $\mathcal{W} \subset \mathcal{V}$ such that $|\mathcal{W}| \leq \kappa$ and $X \setminus \{x\} \subset \bigcup \mathcal{W}$. Now, by elementarity we can take $\mathcal{W} \in M$, but then we also have $\mathcal{W} \subset M$. Let now $W \in \mathcal{W}$ such that $p \in W$. Then the set $U=X \setminus \overline{W} \in M$ is an open neighbourhood of $x$ such that $p \notin U$.
\renewcommand{\qedsymbol}{$\triangle$}
\end{proof}

Suppose that for some $\beta < \kappa^+$ we have constructed a set $\{x_\alpha : \alpha < \beta \} \subset X \cap M$ and subcollections $\{\mathcal{U}_\alpha : \alpha < \beta\}$ of $\mathcal{U}$ such that $\mathcal{U}_\alpha \in M$, $|\mathcal{U}_\alpha| \leq \kappa$ and $\overline{\{x_\gamma : \gamma < \alpha\}} \subset \bigcup \bigcup_{\gamma< \alpha} \mathcal{U}_\gamma$, for every $\alpha < \beta$. Since $\overline{\{x_\alpha: \alpha < \beta \}}$ is bounded we have $L(\overline{\{x_\alpha : \alpha < \beta \}}) \leq \kappa$ and hence we can find a subcollection $\mathcal{U}_\beta$ of $\mathcal{U}$ of cardinality not exceeding $\kappa$ such that $\overline{\{x_\alpha: \alpha < \beta\}} \subset \bigcup \mathcal{U}_\beta$. If $\bigcup_{\alpha \leq \beta} \mathcal{U}_\alpha$ does not cover $X$ then we can pick a point $x_\beta \in X \cap M \setminus \bigcup \bigcup_{\alpha \leq \beta} \mathcal{U}_\alpha$. If we didn't stop then $\{x_\alpha : \alpha < \kappa^+ \}$ would be a free sequence of size $\kappa^+$ in $X$. But that can't happen since $\{x_\alpha : \alpha < \kappa^+\}$ is bounded. So there is a subcollection $\mathcal{V} \subset \mathcal{U}$ such that $|\mathcal{V}| \leq \kappa$ and $X \cap M \subset \bigcup \mathcal{V}$. But since $\mathcal{V} \in M$ we have that $M \models X \subset \bigcup \mathcal{V}$ and hence also $H(\mu)$ thinks that there is $V \in \mathcal{V}$ such that $p \in V$. But that's a contradiction.
\end{proof}

\section{Remarks and acknowledgements}
Note that in the proof of Theorem $\ref{arhanincr}$ we never used $t(X) \leq \kappa$, but only $F(X) \leq \kappa$. Therefore we actually proved the increasing strengthening of the following theorem. 

\begin{theorem} \label{arhanfree}
$|X| \leq 2^{\psi(X) \cdot F(X) \cdot L(X)}$.
\end{theorem}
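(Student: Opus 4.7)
The plan is to replay the proof of Theorem~\ref{arhanincr} with the filtration collapsed to $\lambda = 1$, $X_0 = X$, using the observation (flagged in the remark) that the hypothesis $t(X_\alpha) \le \kappa$ was only ever invoked via the weaker bound $F(X_\alpha) \le \kappa$. Set $\kappa = \psi(X) \cdot F(X) \cdot L(X)$, suppose for contradiction $|X| > 2^\kappa$, and pick $M \prec H(\mu)$ with $|M| = 2^\kappa$, $[M]^\kappa \subset M$, and $\{\kappa, X, \tau\} \subset M$. I aim to show $X \subset M$, which directly contradicts $|X| > |M|$.

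The first step is the single-space version of Claim~1: $\overline{C} \subset M$ for every $C \in [X \cap M]^{\le \kappa}$. Given $p \in \overline{C} \setminus M$, $\psi(X) \le \kappa$ yields open sets $\{U_\alpha : \alpha < \kappa\}$ with $\bigcap U_\alpha = \{p\}$, and since each $X \setminus U_\alpha$ is closed in $X$ we have $L(X \setminus U_\alpha) \le \kappa$; Hausdorffness then gives open covers $\{V_{\alpha\beta} : \beta < \kappa\}$ of $X \setminus U_\alpha$ with $p \notin \overline{V_{\alpha\beta}}$. The sets $C_{\alpha\beta} = C \cap V_{\alpha\beta}$ lie in $M$ by $\kappa$-closedness, and $\overline{C} \setminus \{p\} = \bigcup_{\alpha,\beta} \overline{C_{\alpha\beta}}$. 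Intersecting with $M$ (using $p \notin M$), $M$ satisfies $\overline{C} = \bigcup \overline{C_{\alpha\beta}}$; by elementarity $H(\mu)$ satisfies it too, which contradicts $p \in \overline{C} \setminus \bigcup \overline{V_{\alpha\beta}}$.

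With Claim~1 in hand, the remaining two free-sequence inductions of the main proof go through with $X$ everywhere in place of $X_\theta$. In the inner induction (Claim~2), at each stage $\overline{\{x_\alpha : \alpha < \beta\}}$ is bounded by Claim~1 and has Lindel\"of number $\le \kappa$ by Lemma~\ref{littlemma}, so $\kappa$ many members of $\mathcal{V}$ suffice to cover it. A putative free sequence of length $\kappa^+$ in $X \setminus \{x\}$ becomes, after removing a suitable neighborhood of $x$, a free sequence of the same length in $X$, contradicting $F(X) \le \kappa$; the same hypothesis halts the outer induction. Elementarity of the resulting small cover $\mathcal{V} \in M$ then forces $p \in X \subset \bigcup \mathcal{V}$, the final contradiction.

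The only thing to verify --- hence the only mild ``obstacle'' --- is that every place the main proof invoked ``since $C$ is bounded, $C \subset X_\theta$ for some $\theta$, so $\psi/L/F$ on $X_\theta$ is $\le \kappa$'' can be replaced by quoting the same inequality for $X$ itself. With no filtration present this bookkeeping is transparent, and no new ideas are needed beyond those already developed for Theorem~\ref{arhanincr}.
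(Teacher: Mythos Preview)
Your proposal is correct and follows essentially the same approach as the paper. The paper's proof of Theorem~\ref{arhanfree} is nothing more than the observation that the argument for Theorem~\ref{arhanincr} uses only $F(X_\alpha)\le\kappa$ rather than $t(X_\alpha)\le\kappa$; you have simply written out that specialization to the constant chain explicitly, with the expected simplifications in Claim~1 and in the free-sequence steps.
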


Theorem $\ref{arhanfree}$ was independently proved by Juh\'asz \cite{JP} and the author in his dissertation \cite{S}.

Besides in the entire chapter 6 of Juh\'asz's book and in Juh\'asz and Szentmikl\'ossy's paper \cite{JS}, cardinal functions on unions of chains were also extensively studied by Mikhail Tkachenko, both in a general topological context (\cite{Tk1}, \cite{Tk2}, \cite{Tk3}) and with Torres Falc\'on in the presence of a topological group structure (\cite{Tk4}, see also Torres Falc\'on's paper \cite{TF}). The study of unions of chains is very useful in obtaining reflection theorems for cardinal functions, as proved by Hajnal and Juh\'asz's paper \cite{HJ} and Hodel and Vaughan's paper \cite{HV}.

In Chapter 6 of his dissertation \cite{S} the author claimed to have a simple proof of the increasing strengthening of Theorem $\ref{arhanfree}$. That proof however still relied on two lemmas from Juh\'asz's book \cite{J}, while the present one is more self-contained and even shorter. The author would like to thank his PhD advisor, Gary Gruenhage, for valuable discussion, Istv\'an Juh\'asz for sending him the seminar slides where Theorem $\ref{arhanfree}$ was proved, Mikhail Tkachenko for bringing the reference \cite{Tk3} to his attention and the Center for Advanced Studies in Mathematics at Ben Gurion University for financial support.

\end{document}